\title{Convex Hulls of Curves: Volumes and Signatures}
\author{Carlos Am\'endola, Darrick Lee, Chiara Meroni}
\date{}
\pgfplotsset{width=7cm, compat=newest}
\definecolor{mygreen}{RGB}{37,170,37}
\definecolor{myteal}{RGB}{6,144,179}
\definecolor{myblue}{RGB}{59,100,189}
\definecolor{myorange}{RGB}{240,153,85}
\definecolor{myred}{RGB}{190,15,15}
\definecolor{myyellow}{RGB}{255,220,130}
\definecolor{mypurple}{RGB}{139,0,139}
\tikzset{%
  symbol/.style={
    draw=none,
    every to/.append style={
      edge node={node [sloped, allow upside down, auto=false]{$#1$}}
    },
  },
  on each segment/.style={
    decorate,
    decoration={
      show path construction,
      moveto code={},
      lineto code={
        \path [#1]
        (\tikzinputsegmentfirst) -- (\tikzinputsegmentlast);
      },
      curveto code={
        \path [#1] (\tikzinputsegmentfirst)
        .. controls
        (\tikzinputsegmentsupporta) and (\tikzinputsegmentsupportb)
        ..
        (\tikzinputsegmentlast);
      },
      closepath code={
        \path [#1]
        (\tikzinputsegmentfirst) -- (\tikzinputsegmentlast);
      },
    },
  },
  mid arrow/.style={postaction={decorate,decoration={
        markings,
        mark=at position .5 with {\arrow[#1]{stealth}}
      }}},
}
\newcommand{\R}{\mathbb{R}}
\newcommand{\bx}{\mathbf{x}}
\DeclareMathOperator{\Alt}{Alt} 
\DeclareMathOperator{\sgn}{sgn}
\newcommand{\cO}{\mathcal{O}}
\newcommand{\bv}{\mathbf{v}}
\DeclareMathOperator{\SO}{SO}
\DeclareMathOperator{\ext}{Ext}
\newcommand{\dd}{\mathrm{d}}
\newcommand{\obx}{\overline{\bx}}
\newcommand{\fT}{\mathfrak{T}}
\newcommand{\fD}{\mathfrak{D}}
\newcommand{\TorPath}{\mathsf{T}} % totally positive torsion
\newcommand{\SDetPath}{\mathsf{SD}} % positive determinant
\newcommand{\Cyc}{\mathsf{Cyc}} % cyclic paths
\newcommand{\Ord}{\mathsf{Ord}}
\newcommand{\Lip}{\text{Lip}}
\DeclareMathOperator{\conv}{conv}
\DeclareMathOperator{\vol}{vol}
\newtheorem{theorem}{Theorem}[section]
\newtheorem{corollary}[theorem]{Corollary}
\newtheorem{lemma}[theorem]{Lemma}
\newtheorem{conjecture}[theorem]{Conjecture}
\theoremstyle{definition}
\newtheorem{definition}[theorem]{Definition}
\newenvironment{example}
{\pushQED{\qed}\examplex}
{\popQED\endexamplex}
\newenvironment{remark}
{\pushQED{\qed}\remarkx}
{\popQED\endremarkx}
\begin{document}
\maketitle

\begin{abstract}
Taking the convex hull of a curve is a natural construction in computational geometry. On the other hand, path signatures, central in stochastic analysis,  capture geometric properties of curves, although their exact interpretation for levels larger than two is not well understood. In this paper, we study the use of path signatures to compute the volume of the convex hull of a curve. We present sufficient conditions for a curve so that the volume of its convex hull can be computed by such formulae. The canonical example is the classical moment curve, and our class of curves, which we call \emph{cyclic}, includes other known classes such as $d$-order curves and curves with totally positive torsion. We also conjecture a necessary and sufficient condition on curves for the signature volume formula to hold. Finally, we give a concrete geometric interpretation of the volume formula in terms of lengths and signed areas.
\end{abstract}

\noindent

\section{Introduction}

Taking the convex hull of a curve is a classical geometric construction. Understanding both its computation and properties is important for non-linear computational geometry, with the case of space curves particularly relevant for applications such as geometric modeling \cite{sedykh1986structure, seong2004convex, ranestad2012convex}. 
On the other hand, volumes are a fundamental geometric invariant. Computing volumes of convex hulls leads to interesting isoperimetric problems in optimization \cite{melzak1960isoperimetric}, and has applications in areas like ecology \cite{cornwell2006trait} and spectral imaging \cite{messinger2010spectral}.

In the recent article \cite{PIM:ConvexHullPositiveTorsion}, the authors show that for curves which satisfy a \emph{totally positive torsion} property, one can compute the volume of their convex hull using a certain integral formula. The totally positive torsion property is a local convexity condition, and an example of such a path is the \emph{moment curve}
\[
    \bx(t) = (t, t^2, \ldots, t^d) : [0,1] \rightarrow \R^d.
\]
From the perspective of discrete geometry, the moment curve is also a canonical example of a larger class of curves called \emph{d-order curves}~\cite{Sturmfels:CyclicPolytopesdCurves}.
Our contribution is to extend the integral formulae of~\cite{PIM:ConvexHullPositiveTorsion} for volumes of convex hulls to the class of \emph{cyclic curves}, which are uniform limits of $d$-order curves. The motivation behind this name arises from the fact that $d$-order curves can be approximated by cyclic polytopes, which play a central role in our proof of this generalization.

Our method uses the notion of the \emph{path signature}~\cite{chen_integration_1958}, a powerful tool which is widely used in both stochastic analysis~\cite{friz_course_2020} and machine learning~\cite{lyons_signature_2022}, which represents a path as an infinite sequence of tensors. While it is well-known that the path signature characterizes paths up to \emph{tree-like equivalence}~\cite{hambly_uniqueness_2010}, individual terms of the path signature are difficult to interpret geometrically. The article~\cite{DR:InvariantsMultidimensional} shows that certain orthogonal invariants of the path signature can be understood as a notion of signed volume. In particular, they show that this orthogonal invariant computes the volume of the convex hull in the specific case of the moment curve. This result inspires our extension to the setting of cyclic curves, which forms a connection between the convex hull formulae of~\cite{PIM:ConvexHullPositiveTorsion} and the path signature.

\section{Classes of curves}

In this section, we consider several classes of paths for which the convex hull of the curve is well-behaved. Throughout this article, we consider Lipschitz-continuous paths $\bx = (x_1, \ldots, x_d) \in \Lip([0,1], \R^d)$. 
The totally positive torsion property \cite{PIM:ConvexHullPositiveTorsion} of a path is defined as follows.
\begin{definition}
    [Totally positive torsion ($\TorPath^d$)]\label{def:totally_pos_torsion}
    A path $\bx \in C^d((0,1), \R^d) \cap C([0,1],\R^d)$ has \emph{totally positive torsion} if all the leading principal minors of the matrix of derivatives
    \begin{equation}
        \mathfrak{T}(\bx)(t) \coloneqq (\bx'(t), \ldots, \bx^{(d)}(t))
    \end{equation}
    are positive for all $t \in (0,1)$. The space of totally positive torsion paths is denoted by $\TorPath^d$.
\end{definition}
Notice that this definition is dependent on the parametrization and is not invariant under the action of the orthogonal group. The space $\TorPath^d$ is therefore the space of all curves in $\R^d$ that admit a rotation which turns them into totally positive torsion paths. Also, this definition requires a high regularity of $\bx$, which should be at least $d$ times differentiable. In \cite{PIM:ConvexHullPositiveTorsion} we find also the following larger class of curves, used in many of their proofs.

\begin{definition}
    [Strictly positive determinant condition ($\SDetPath^d$)]\label{def:strict_det}
    A path $\bx \in \Lip([0,1], \R^d)$ satisfies the \emph{strictly positive determinant condition} if the property
    \begin{equation}
        \fD(\bx)(t_1, \ldots, t_d) \coloneqq \det(\bx'(t_1), \ldots, \bx'(t_d)) > 0, \quad 0 < t_1 < \ldots < t_d < 1
    \end{equation}
    holds where $\bx$ is differentiable. The space of such strictly positive determinant paths is denoted $\SDetPath^d$.
\end{definition}

On the other hand, the following class of curves is purely geometric and therefore independent of the parametrization.  

\begin{definition}
    [$d$-order path ($\Ord^d$)]\label{def:d_order}
    A path $\bx \in \Lip([0,1], \R^d)$ is a \emph{$d$-order path} if any affine hyperplane in $\R^d$ intersects the image of $\bx$ in at most $d$ points. This occurs exactly when the property
    \begin{equation}\label{eq:det_dorder}
        \det 
        \begin{pmatrix}
            1 & 1 & \dots & 1 \\
            \bx(t_0) & \bx(t_1) & \dots & \bx(t_d)
        \end{pmatrix}
        > 0
        \qquad \hbox{ for all } 0\leq t_0 < \ldots < t_d\leq 1
    \end{equation}
    holds. The space of $d$-order paths is denoted $\Ord^d$. 
\end{definition}

These curves have special properties and they have been studied in the literature intensively, although under many different names. We follow here the language of \cite{Sturmfels:CyclicPolytopesdCurves}. These curves were called \emph{comonotone} by Motzkin \cite{Motzkin:Comonotone,Motzkin:ConvexTypeVarieties}, who attributes the origin of the definition to Juel \cite{Juel:NonanalyticCurves}. This concept of monotonicity goes also back to Hjelmslev \cite{Hjelmslev:Monotones}. In Labourie \cite{Labourie:Hyperconvex} they are called \emph{hyperconvex}. Maybe the most famous name that $d$-order curves have is \emph{convex}. However, various authors give slightly different definitions of convexity, sometimes counting the intersection points of a curve with a hyperplane with multiplicities \cite{SedSha:ConjecturesConvexCurves, ShaSha:Corrigendum}, sometimes allowing some but not all the determinants in \eqref{eq:det_dorder} to be zero \cite{Karp:MomentCurves}. For instance, in \cite{Karp:MomentCurves} our $d$-order curves are called \emph{strictly convex}. We point out that this definition is not the same as the one of strictly convex curves used by Barner \cite{Barner:ConvexCurves} and Fabricius-Bjerre \cite{FabBje:StrictlyConvex,FabBje:PolygonsOrdern}.

Independently of the name, many results are known and conjectures have been stated. For instance, in \cite{KarStu:TchebycheffSystems} the authors compute volume and Caratheodory number of convex curves, and in \cite{schoenberg1954isoperimetric} a volume formula for their convex hull appears, in the case of closed even dimensional such curves. One of the peculiarities of $d$-order curves that we will exploit is that the convex hull of every $n$-tuple of points on the curve is a \emph{cyclic polytope} \cite{Ziegler:LecturesPolytopes} having those $n$ points as vertices. \medskip

By \cite[Lemma 3.1]{PIM:ConvexHullPositiveTorsion}, the totally positive torsion property implies the strictly positive determinant condition, hence $\TorPath^d \subseteq \SDetPath^d$. Furthermore, the proof of~\cite[Corollary 2.5]{PIM:ConvexHullPositiveTorsion} shows that the strictly positive determinant condition for a path $\bx$ implies that $\bx$ is a $d$-order path, hence $\SDetPath^d \subseteq \Ord^d$. As the following two examples show, both of these inclusions are strict.

\begin{example}[$\TorPath^d \subsetneq \SDetPath^d$]\label{ex:SDexample}
    Consider the path defined by
    \[
        \bx(t) = \big( t, (t-0.5)^4 \big).
    \]
    The determinant from the $\SDetPath^d$ condition is
    \[
        \fD(\bx)(t_1, t_2) = \begin{vmatrix} 1 & 1 \\ 4(t_1 - 0.5)^3 & 4(t_2 - 0.5)^3 \end{vmatrix} = 4(t_2 - 0.5)^3 - 4(t_1 - 0.5)^3 > 0
    \]
    whenever $t_1 < t_2$. However, the torsion matrix is
    \[
        \fT(\bx)(t) = \begin{pmatrix} 1 & 0 \\ 4(t-0.5)^3 & 12(t-0.5)^2 \end{pmatrix},
    \]
    and the determinant is zero when $t=0.5$. 
\end{example}

\begin{example}[$\SDetPath^d \subsetneq \Ord^d$]\label{ex:circle}
    Consider the unit circle path defined by 
    \[
        \bx(t) = \big(\cos(2\pi t), \sin(2\pi t)\big).
    \]
    It is clear that $\bx$ is a $2$-order path since any line in $\R^2$ can intersect this circle in at most $2$ points. However, the determinant
    \[
        \fD(\bx)\left(\frac{1}{4}, \frac{3}{4}\right) = \begin{vmatrix} -2\pi & 2\pi \\ 0 & 0 \end{vmatrix} = 0
    \]
    does not satisy the strictly positive determinant condition.
\end{example}

On one end, we can interpret the condition for $\TorPath^d$ to be a \emph{local} condition, in the sense that it is checked simply at individual points on the curve. On the other end, the condition for $\Ord^d$ is a \emph{global} condition, in the sense that it simultaneously takes $d+1$ points of the curve into consideration. 

As Example \ref{ex:circle} shows, the strictly positive determinant condition is already quite restrictive; in particular, any closed curve will violate this condition in a similar way. We find that the more general global condition of $\Ord^d$ is the correct notion for our purposes. In fact, we can further extend the class of $\Ord^d$; the name suggests a connection to \emph{cyclic} polytopes.

\begin{definition}
    [Cyclic paths ($\Cyc^d$)]\label{def:cyclic}
    A path $\bx \in \Lip([0,1], \R^d)$ is \emph{cyclic} if it is a limit, in the Lipschitz (or equivalently, uniform) topology, of $d$-order curves. The space of cyclic paths is denoted by $\Cyc^d$. 
\end{definition}
In particular, cyclic curves include some piecewise linear curves, which are not contained in any of the previous classes. It also allows subsets of the curve to lie in a lower dimensional space. See Figure \ref{fig:cyc_not_ord} for an example.  We point out that for a curve $\bx$ that spans the whole $\R^d$, cyclicity is the same as the relaxed version of condition \eqref{eq:det_dorder}, where the determinants are required to be non-negative. In the case that $\bx$ is contained in some lower dimensional subspace, the relaxed version of \eqref{eq:det_dorder} puts no condition on the curve, whereas cyclicity does.

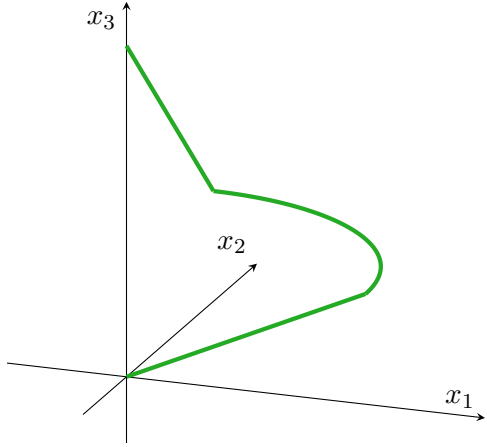
\begin{figure}
    \centering
  \begin{tikzpicture}
    \begin{axis}[view={20}{20},
    width=4in,
    height=4in,
    axis lines=center,
    xlabel=$x_1$,ylabel=$x_2$,zlabel=$x_3$,
    xticklabel=\empty,
    yticklabel=\empty,
    zticklabel=\empty,
    ylabel style={above left},
    xlabel style={above left},
    zlabel style={below left},
    xtick={0},
	ytick={0},
	ztick={0},
    xmin=-0.5,xmax=1.5,ymin=-0.5,ymax=1.5,zmin=-0.3,zmax=1.7]
    \draw [color=mygreen,ultra thick]
    (0,0,0) -- (1,0,1/2);
    \draw [color=mygreen,ultra thick]
    (0,1,1/2) -- (0,0,1.5);
    \addplot3+[no markers, samples=200, samples y=0, domain=0:1/4, variable=\t, style=ultra thick, color = mygreen]
                              ({cos(2*pi*\t r)}, {sin(2*pi*\t r)}, {1/2});
    \end{axis}
    \end{tikzpicture}
    \caption{A curve in $\Cyc^d \setminus \Ord^d$, containing two line segments and an arc which lies in the plane $\{x_3=\frac{1}{2}\}$.}
    \label{fig:cyc_not_ord}
\end{figure}
In conclusion, we obtain the following sequence of strict inclusions, transitioning from local to global properties of curves:
\begin{equation}
    \TorPath^d \subset \SDetPath^d \subset \Ord^d \subset \Cyc^d.
\end{equation}

\begin{remark}
Cyclic curves are generalizations of \emph{convex curves}, which are connected to the Shapiro-Shapiro conjecture, stated in the early 1990s, regarding real Schubert calculus and how to obtain totally real configurations. 
For the state-of-the-art on this conjecture, see \cite{ShaSha:Corrigendum}.
\end{remark}

\section{Path signatures and volume}

In this section, we introduce the connection between volume formulae for convex hulls of curves and the \emph{path signature}, a characterization of curves based on iterated integrals~\cite{chen_integration_1958}. In particular, we show that the volume of the convex hull of a cyclic curve can be written in terms of an antisymmetrization of the path signature. This connection was first considered in~\cite{DR:InvariantsMultidimensional} in the case of the moment curve. 

\begin{definition}
    Suppose $\bx = (x_1, \ldots, x_d) \in \Lip([0,1], \R^d)$. The \emph{level $k$ path signature} of $\bx$ is a tensor $\sigma^{(k)}(\bx) \in (\R^d)^{\otimes k}$, defined by
    \begin{align}
        \sigma^{(k)}(\bx) \,\,\coloneqq\,\, \int_{\Delta^k} \bx'(t_1) \otimes \cdots \otimes \bx'(t_k) \, \dd t_1 \cdots \dd t_k \in (\R^d)^{\otimes k}
    \end{align}
    where the integration is over the $k$-simplex
    \begin{align*}
        \Delta^k \,\,\coloneqq \,\,\{ 0 \leq t_1 < \cdots < t_k \leq 1\}.
    \end{align*}
    Given a multi-index $I = (i_1, \ldots, i_k) \in [d]^k$, the \emph{path signature of $\bx$ with respect to $I$} is
    \begin{align}
        \sigma_I(\bx) \,\,\coloneqq\,\, \int_{\Delta^k} x'_{i_1}(t_1) \cdots x'_{i_k}(t_k)  \,\dd t_1 \cdots \dd t_k \in \R.
    \end{align}
\end{definition}

The \emph{path signature} $\sigma(\bx)$ of a path $\bx$ is the formal power series obtained by summing up all levels $\sigma^{(k)}(\bx)$. It is well known that the path signature characterizes paths up to \emph{tree-like equivalence}~\cite{chen_integration_1958, hambly_uniqueness_2010}; however, the individual entries of the signature $\sigma_I(\bx)$ are often difficult to understand geometrically. We aim to provide a geometric interpretation of signature terms via antisymmetrization into the exterior algebra $\Lambda(\R^d) \coloneqq \bigoplus_{k=0}^d \Lambda^k \R^d$, where $\Lambda^k \R^d$ is the vector space of alternating tensors of $\R^d$ of degree $k$. These tensors are indexed using order preserving injections $P: [k] \rightarrow [d]$, denoted here by $\cO_{k,d}$.

\begin{definition}
    Suppose $\bx = (x_1, \ldots, x_d) \in \Lip([0,1], \R^d)$. The \emph{level $k$ alternating signature} of $\bx$ is a tensor $\alpha^{(k)}(\bx) \in \Lambda^k\R^d$, defined by
    \begin{align}
        \alpha^{(k)}(\bx) \,\,\coloneqq\,\, \int_{\Delta^k} \bx'(t_1) \wedge \cdots \wedge \bx'(t_k) \, \dd t_1 \cdots \dd t_k \in \Lambda^k\R^d.
    \end{align}
    Given an order-preserving injection $P: [k] \rightarrow [d]$, the \emph{alternating signature of $\bx$ with respect to $P$} is
    \begin{align}
        \alpha_P(\bx) \,\,\coloneqq\,\, \frac{1}{k!}\int_{\Delta^k} \det(x'_{P(1)}(t_1), \ldots, x'_{P(k)}(t_k))  \,\dd t_1 \cdots \dd t_k \in \R.
    \end{align}
\end{definition}

The alternating signature can equivalently be defined using the \emph{antisymmetrization}
\begin{align}
    \Alt: (\R^d)^{\otimes k} \rightarrow \Lambda^k (\R^d), \quad \quad \Alt(\sigma)_P \,\,\coloneqq \,\,\frac{1}{k!} \sum_{\tau \in \Sigma_d} \sgn(\tau) \sigma_{P \circ \tau},
\end{align}
where
\begin{align}
    \alpha^{(k)}(\bx) \,\,\coloneqq \,\,\Alt(\sigma^{(k)}(\bx)) \in \Lambda^k(\R^d).
\end{align}

We will primarily be interested in the level $k=2$ and the level $k=d$ alternating signature. Let $\bx \in \Lip([0,1],\R^d)$ and consider the alternating signature at level $k=2$. At this level, $\alpha^{(2)}(\bx)$ can be viewed as a $d \times d$ antisymmetric matrix whose $(i,j)$-th entry is
\begin{align*}
    \alpha_{i,j}(\bx) \,\,=\,\, \frac{1}{2} \left(\sigma_{i,j}(\bx) - \sigma_{j,i}(\bx) \right),
\end{align*}
which is exactly the \emph{signed area} (see Figure \ref{fig:signed_area}) of the path $\bx$ projected to the $(e_i, e_j)$-plane. 
   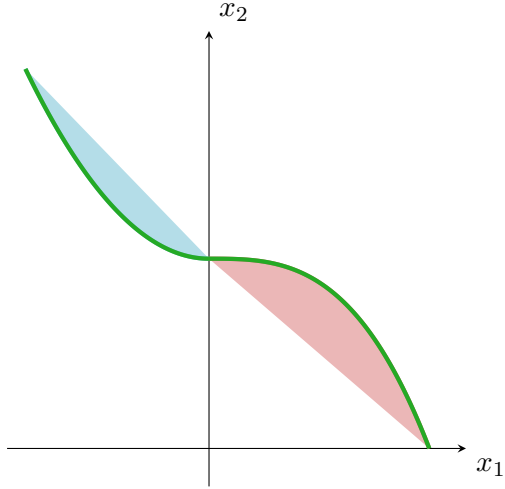
\begin{figure}
    \centering
  \begin{tikzpicture}
      \begin{axis}[view={65}{50},
    width=3in,
    height=3in,
    axis lines=center,
    xlabel=$x_1$,ylabel=$x_2$,
    xticklabel=\empty,
    yticklabel=\empty,
    ylabel style={above right},
    xlabel style={below right},
    xtick={0},
	ytick={0},
    xmin=-1.1,xmax=1.4,ymin=-0.2,ymax=2.2]
    \addplot+[no markers, fill, samples=200, samples y=0, domain=0:6/5, variable=\x, style=ultra thick, color = myred!60, fill , fill opacity = 0.50]
                              ({\x}, {(-5/6*\x)^3+1});
    \addplot+[no markers, fill, samples=200, samples y=0, domain=-1:0, variable=\x, style=ultra thick, color = myteal!60, fill , fill opacity = 0.50]
                              ({\x}, {(\x)^2+1});
    \addplot+[no markers, samples=200, samples y=0, domain=0:6/5, variable=\x, style=ultra thick, color = mygreen]
                              ({\x}, {(-5/6*\x)^3+1});
    \addplot+[no markers, samples=200, samples y=0, domain=-1:0, variable=\x, style=ultra thick, color = mygreen]
                              ({\x}, {(\x)^2+1});
    \end{axis}
    \end{tikzpicture}
    \caption{The green curve $\bx$ whose signed area is the sum of the areas of the red and blue regions, where the red area comes with positive sign and the blue area comes with negative sign.}
    \label{fig:signed_area}
\end{figure}
In the case of level $k=d$, the exterior power $\Lambda^d \R^d$ is one-dimensional, and we can express the level $d$ alternating signature as
\begin{align*}
    \alpha^{(d)}(\bx) = \frac{1}{d!}\int_{\Delta^d} \det(\bx'(t_1), \ldots, \bx'(t_d)) \mathrm{d}t_1\ldots \mathrm{d}t_d.
\end{align*}
Furthermore, the top-level alternating signature is rotation invariant. 
\begin{theorem}[\cite{DR:InvariantsMultidimensional}]
\label{thm:so_invariance}
    The level $d$ alternating signature for paths $\bx \in \Lip([0,1],\R^d)$ is invariant under the special orthogonal group $\SO(d)$ (where the action acts pointwise over $[0,1]$), i.e., given $V \in \SO(d)$,
    \begin{align*}
        \alpha^{(d)}(V\bx) \,\,=\,\, \alpha^{(d)}(\bx). 
    \end{align*}
\end{theorem}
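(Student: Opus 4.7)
The plan is to use the explicit determinant formula for $\alpha^{(d)}(\bx)$ combined with the multilinearity and alternating properties of the determinant, together with the defining property $\det(V)=1$ for $V \in \SO(d)$.

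First I would observe that for the pointwise action, the derivative satisfies $(V\bx)'(t) = V\bx'(t)$ at every point of differentiability of $\bx$, since $V$ is a constant linear map. Substituting this into the one-dimensional expression
\[
    \alpha^{(d)}(V\bx) \,\,=\,\, \frac{1}{d!}\int_{\Delta^d} \det\bigl( V\bx'(t_1), \ldots, V\bx'(t_d) \bigr) \, \dd t_1 \cdots \dd t_d,
\]
and pulling $V$ out via multilinearity of the determinant, I get $\det(V\bx'(t_1), \ldots, V\bx'(t_d)) = \det(V)\,\det(\bx'(t_1), \ldots, \bx'(t_d))$. Since $V \in \SO(d)$ has $\det(V) = 1$, the integrand is pointwise unchanged, and the conclusion $\alpha^{(d)}(V\bx) = \alpha^{(d)}(\bx)$ follows by integrating.

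Equivalently, and perhaps more conceptually, the $\SO(d)$-action extends to $\Lambda^d \R^d$ as multiplication by $\det(V) = 1$, so the action on the top exterior power is trivial. Since $\alpha^{(d)}(\bx) \in \Lambda^d\R^d$ is obtained by integrating $\bx'(t_1) \wedge \cdots \wedge \bx'(t_d)$ over $\Delta^d$, and since $V\bx'(t_1) \wedge \cdots \wedge V\bx'(t_d) = \det(V)\,\bx'(t_1) \wedge \cdots \wedge \bx'(t_d)$, invariance is immediate. There is no real obstacle here — the only thing to check is that the pointwise action commutes with differentiation (trivial, since $V$ is constant) and that the multilinear extraction of $V$ from the determinant is justified almost everywhere on $\Delta^d$ (true by Lipschitz regularity, which ensures $\bx'$ exists a.e.\ and the integral is well defined).
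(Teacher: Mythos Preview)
Your argument is correct: the key identity $\det(V\bx'(t_1),\ldots,V\bx'(t_d)) = \det(V)\det(\bx'(t_1),\ldots,\bx'(t_d))$ together with $\det(V)=1$ for $V\in\SO(d)$ gives the result immediately, and your remarks on Lipschitz regularity handle the almost-everywhere differentiability. Note that the paper does not actually supply a proof of this statement---it is quoted as a result from~\cite{DR:InvariantsMultidimensional}---so there is no in-paper argument to compare against; your direct determinant computation is the natural and standard justification.
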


\subsection{Convex hull formulae for cyclic curves}

In~\cite{DR:InvariantsMultidimensional}, the authors interpret the alternating signature $\alpha^{(d)}$ as the \emph{signed-volume} of a curve, and furthermore show that $\alpha^{(d)}$ is the volume of the convex-hull of the moment curve. The first part of our generalization extends this to $d$-order curves. This was initially proved using other methods in \cite[Theorem 6.1]{KarStu:TchebycheffSystems}.

\begin{theorem}\label{thm:dordervolformula}
    Let $\bx : [0,1] \rightarrow \R^d$ be a $d$-order curve. Then
    \[
        \vol(\conv(\bx)) = \alpha^{(d)}(\bx).
    \]
\end{theorem}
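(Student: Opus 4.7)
The plan is to approximate $\bx$ by piecewise linear (PL) paths with vertices on $\bx$, prove the identity for such PL paths by a combinatorial argument, and pass to the limit using continuity. Fix a sequence of partitions $0 = t_0^{(n)} < t_1^{(n)} < \cdots < t_n^{(n)} = 1$ with mesh going to zero, and let $\bx_n$ be the PL interpolant with vertices $p_i = \bx(t_i^{(n)})$. Since $\bx \in \Ord^d$, the sampled vertices satisfy the $d$-order determinant condition, so $\conv(\bx_n)$ is a cyclic polytope with vertex set $\{p_0, \ldots, p_n\}$. The uniform (and $1$-variation) convergence $\bx_n \to \bx$ yields $\vol(\conv(\bx_n)) \to \vol(\conv(\bx))$ by Hausdorff continuity of convex hulls, and $\alpha^{(d)}(\bx_n) \to \alpha^{(d)}(\bx)$ by continuity of the path signature on bounded variation paths. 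So it suffices to treat each $\bx_n$.

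Writing $v_i = p_i - p_{i-1}$ and evaluating the iterated integral on the product of line segments, where each $\bx_n'$ is piecewise constant, one finds that tuples $(t_1, \ldots, t_d)$ with two entries in the same segment contribute zero by antisymmetry of $\det$, giving
\begin{equation*}
    \alpha^{(d)}(\bx_n) \,=\, \frac{1}{d!} \sum_{1 \le \tau_1 < \cdots < \tau_d \le n} \det(v_{\tau_1}, \ldots, v_{\tau_d}).
\end{equation*}
With $q_i \coloneqq p_i - p_0$, I would substitute $v_\tau = q_\tau - q_{\tau-1}$ and expand by multilinearity of $\det$ to rewrite this as a signed sum of determinants $\det(q_{a_1}, \ldots, q_{a_d})$ indexed by strictly increasing tuples $0 \le a_1 < \cdots < a_d \le n$. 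Terms with $a_1 = 0$ drop out because $q_0 = 0$.

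The crucial step is identifying the coefficient of each remaining determinant. Tracking the expansion, this coefficient factors over the maximal blocks of consecutive integers in $\{a_1, \ldots, a_d\}$: each block of size $s$ contributes $\sum_\epsilon (-1)^{|\epsilon|}$ summed over non-decreasing $\epsilon \in \{0,1\}^s$ (monotonicity coming from the constraint $\tau_1 < \cdots < \tau_d$), which equals $1$ for $s$ even and $0$ for $s$ odd. A boundary adjustment at $n$ forces the final $\epsilon$-entry to vanish when the rightmost block terminates at $n$, so that block is allowed to have arbitrary parity. The tuples surviving with coefficient $1$ are precisely the $F = \{a_1, \ldots, a_d\} \subset \{1, \ldots, n\}$ satisfying \emph{Gale's evenness condition} characterizing the facets of the cyclic polytope $\conv(p_0, \ldots, p_n)$ not containing $p_0$.

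Finally, the $d$-order hypothesis forces each $\det(p_{a_1} - p_0, \ldots, p_{a_d} - p_0) > 0$, so
\begin{equation*}
    \alpha^{(d)}(\bx_n) \,=\, \sum_{F} \vol\bigl(\conv(p_0, p_{a_1^F}, \ldots, p_{a_d^F})\bigr),
\end{equation*}
which is the total volume of the pulling triangulation of the cyclic polytope at apex $p_0$ and therefore equals $\vol(\conv(\bx_n))$. The main obstacle is verifying the combinatorial matching between the sign-sum from the multilinear expansion and Gale's criterion; once that is in hand, passing $n \to \infty$ concludes the proof.
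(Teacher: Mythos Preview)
Your proposal is correct and follows essentially the same strategy as the paper: approximate by cyclic polytopes on sampled points, use the pulling triangulation at $p_0$ described by Gale's evenness criterion, and pass to the limit via continuity of convex hulls and of the signature. The only difference is one of emphasis: the paper writes $\vol(C_n)$ as the Gale sum and then invokes \cite[Lemma~3.29]{DR:InvariantsMultidimensional} to identify the limit with $\alpha^{(d)}(\bx)$, whereas you first compute $\alpha^{(d)}(\bx_n)$ for the PL interpolant and carry out the multilinear expansion to match it with the Gale sum explicitly before taking limits---so your version unpacks the combinatorial step that the paper outsources to the cited lemma.
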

\begin{proof}
    Consider the $n+1$ points $p_0 = \bx(t_0), \dots, p_n = \bx(t_n)$ on the $d$-order curve, with $0\leq t_0 < \ldots < t_n\leq 1$. Denote by $C_n$ their convex hull $\conv(p_0,\ldots,p_n)$, which is a cyclic polytope. We can realize a triangulation of $C_n$ by pulling one vertex, as in \cite[Lemma 3.29]{DR:InvariantsMultidimensional}. Indeed, by Gale evenness criterion, a triangulation of a cyclic polytope is given by simplices with vertices $p_{i_0}, \ldots , p_{i_d}$ satisfying
    \begin{itemize}
        \item for even $d$: $i_0 = 0$ and $i_{2\ell +1} + 1 = i_{2\ell + 2}$ for any $1\leq \ell \leq d$;
        \item for odd $d$: $i_0 = 0$, $i_d = n$, and $i_{2\ell +1} + 1 = i_{2\ell + 2}$ for any $1\leq \ell \leq d-1$.
    \end{itemize}
    Let us denote by $\mathcal{I}$ the set of all $d$-tuples of indices that satisfy these conditions.
    Therefore, the volume of $C_n$ is the sum of the volumes of all these simplices. Because $\bx$ is a $d$-order curve
    \begin{equation}
        \det 
        \begin{pmatrix}
            1 & 1 & \dots & 1 \\
            \bx(s_0) & \bx(s_1) & \dots & \bx(s_d)
        \end{pmatrix}
        > 0
    \end{equation}
    for every choice of increasing $0 \leq s_0 < \ldots < s_d\leq 1$. Hence, 
    \begin{equation}\label{eq:volume_cyclic_polytope}
        \vol (C_n) = \frac{1}{d!} \sum_{\{i_0,\ldots,i_d\}\in \mathcal{I}} \det 
        \begin{pmatrix}
            1 & 1 & \dots & 1 \\
            p_{i_0} & p_{i_1} & \dots & p_{i_d}
        \end{pmatrix}.
    \end{equation}
    Since $\bx$ has bounded variation, we can take the limit of \eqref{eq:volume_cyclic_polytope} for $n\to \infty$, as in \cite[Lemma 3.29]{DR:InvariantsMultidimensional}. In particular, we use the continuity of convex hulls from~\cite[Section 1.8]{Schneider:BrunnMinkowskiTheory} and the continuity of truncated signatures from~\cite[Proposition 7.63]{friz_multidimensional_2010}. This gives the formula $\vol(\conv(\bx)) = \alpha^{(d)}(\bx)$.
\end{proof}

However, since the volume and the path signature are both continuous, the volume formula will still hold for limits of $d$-order curves
\begin{theorem}\label{thm:cyclicvolformula}
    Let $\bx : [0,1] \rightarrow \R^d$ be a cyclic curve. Then,
    \[
        \vol(\conv(\bx)) = \alpha^{(d)}(\bx).
    \]
\end{theorem}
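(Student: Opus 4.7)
The plan is to reduce to Theorem \ref{thm:dordervolformula} via an approximation argument, exploiting the fact that both sides of the identity are continuous functionals of the path. Let $\bx \in \Cyc^d$; by Definition \ref{def:cyclic}, there is a sequence $\{\bx_n\} \subset \Ord^d$ with $\bx_n \to \bx$ in the uniform (equivalently Lipschitz) topology. Applying Theorem \ref{thm:dordervolformula} to each $\bx_n$ yields
\[
    \vol(\conv(\bx_n)) \,=\, \alpha^{(d)}(\bx_n),
\]
so it suffices to verify that both sides pass to the limit as $n \to \infty$.

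For the right-hand side, I would invoke the same continuity of truncated signatures used in the proof of Theorem \ref{thm:dordervolformula} (\cite[Proposition 7.63]{friz_multidimensional_2010}): the map $\bx \mapsto \alpha^{(d)}(\bx)$, being a fixed polynomial contraction of the level-$d$ signature tensor, is continuous in the Lipschitz topology. This directly gives $\alpha^{(d)}(\bx_n) \to \alpha^{(d)}(\bx)$.

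For the left-hand side, I would proceed in two standard steps. Uniform convergence $\bx_n \to \bx$ implies Hausdorff convergence of the images $\bx_n([0,1]) \to \bx([0,1])$; then continuity of the convex hull operator with respect to the Hausdorff metric (\cite[Section 1.8]{Schneider:BrunnMinkowskiTheory}, again as in the proof of Theorem \ref{thm:dordervolformula}) yields $\conv(\bx_n) \to \conv(\bx)$ in Hausdorff distance. Since all these convex bodies lie in a common compact subset of $\R^d$, the Lebesgue volume is continuous along this sequence, so $\vol(\conv(\bx_n)) \to \vol(\conv(\bx))$. Combining the two limits completes the argument.

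The main subtlety to check, rather than a real obstacle, is the possibly degenerate case illustrated in Figure \ref{fig:cyc_not_ord}: $\conv(\bx)$ may be lower-dimensional, so that $\vol(\conv(\bx)) = 0$. This causes no difficulty, because Lebesgue volume remains continuous on compact convex sets in the Hausdorff metric irrespective of their dimension, and the identity $\alpha^{(d)}(\bx_n) = \vol(\conv(\bx_n)) \ge 0$ forces $\alpha^{(d)}(\bx) = 0$ in the limit as well. Thus both sides vanish consistently, and the equality persists across all of $\Cyc^d$.
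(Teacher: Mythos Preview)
Your argument is correct and matches the paper's proof essentially step for step: approximate by $d$-order curves, apply Theorem~\ref{thm:dordervolformula}, and pass to the limit using continuity of the convex hull and volume (\cite[Section~1.8]{Schneider:BrunnMinkowskiTheory}) on one side and continuity of the truncated signature (\cite[Proposition~7.63]{friz_multidimensional_2010}) on the other. Your added remarks on the Hausdorff-convergence chain and the degenerate lower-dimensional case are sound and merely make explicit what the paper leaves implicit.
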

\begin{proof}
Since $\bx$ is cyclic, we can write it as a limit of $d$-order curves $\bx_k$ in $\R^d$, so that $\bx = \lim_{k\rightarrow \infty} \bx_k$ . Since $\conv$ is a continuous operation
\cite[Section 1.8]{Schneider:BrunnMinkowskiTheory}, it holds that $\conv (\bx) = \lim \conv (\bx_k)$. By Theorem \ref{thm:dordervolformula}, we know that $\vol( \conv (\bx_k)) = \alpha^{(d)}(\bx_k) $. Hence we have
\begin{equation}
    \vol( \conv (\bx)) = \lim_{k\to \infty} \vol( \conv (\bx_k)) = \lim_{k\to \infty} \alpha^{(d)}(\bx_k) = \alpha^{(d)}(\bx) .
\end{equation}
where the first equality follows by the continuity of the volume operation \cite[Theorem 1.8.20]{Schneider:BrunnMinkowskiTheory}; the second equality is due to Theorem \ref{thm:dordervolformula}; the last equality follows from the stability of the signature inside the class of continuous curves of bounded variation~\cite[Proposition 7.63]{friz_multidimensional_2010}.
\end{proof}

Starting from a cyclic curve there is a natural way to construct a centrally symmetric convex body  that has the same volume of the convex hull of the curve itself. Indeed, let $\bx\subset \R^d$ be a cyclic curve and interpret the vector $\bx'$ of first derivatives of the parametrization of $\bx$ as a random vector of $\R^d$. Then one can define the \emph{zonoid} $\mathcal{Z}(\bx')$ associated to the given random vector, as in \cite{Vitale:RandomDeterminantsZonoids}. We have that
\begin{align}
    \vol( \conv (\bx)) &= \frac{1}{d!} \int_{\Delta^d} \det(\bx'(t_1), \ldots, \bx'(t_d)) \, \mathrm{d}t_1\ldots \mathrm{d} t_d \\
    &= \frac{1}{d!\cdot d!} \int_{[0,1]^d} | \det(\bx'(t_1), \ldots, \bx'(t_d)) | \, \mathrm{d}t_1\ldots \mathrm{d} t_d = \frac{1}{d!} \vol (\mathcal{Z}(\bx')).
\end{align}
We illustrate this with the following example.

\begin{example}
    Let $\bx : [0,1] \to \R^2$ be the moment curve parametrized by $(t,t^2)$. Then we can compute explicitly the zonoid $\mathcal{Z}(\bx')$. Its support function \cite{Schneider:BrunnMinkowskiTheory} is by definition
    \begin{equation}
        h_\mathcal{Z} (u,v) = \frac{1}{2} \int_0^1 | \langle (u,v) , \bx'(t) \rangle | \, \mathrm{d} t = \frac{1}{2} \int_0^1 | u + 2 v t | \, \mathrm{d} t
    \end{equation}
    for $(u,v)\in \R^2$. Via a case study one can solve the above integral and find the equations of the dual body $\mathcal{Z}(\bx')^\circ$. For more on zonoids and convex bodies see \cite{Schneider:BrunnMinkowskiTheory}. This allows to compute the equations of the semialgebraic zonoid 
   \begin{equation}
       \mathcal{Z}(\bx') = \{ (x,y)\in \R^2 \,|\, -4 x^2 + 4 x - 4 y + 1 \geq 0, -4 x^2 - 4 x + 4 y + 1 \geq 0 \}
   \end{equation}
   displayed in Figure \ref{fig:moment2d}.
   \begin{figure}
    \centering
    \begin{tikzpicture}
      \begin{axis}[view={65}{50},
    width=3in,
    height=3in,
    axis lines=center,
    xlabel=$x_1$,ylabel=$x_2$,
    xticklabel=\empty,
    yticklabel=\empty,
    ylabel style={above right},
    xlabel style={below right},
    xtick={0},
	ytick={0},
    xmin=-0.2,xmax=1.2,ymin=-0.2,ymax=1.2]
    \addplot+[no markers, fill, samples=200, samples y=0, domain=0:1, variable=\x, style=ultra thick, color = mygreen, fill , fill opacity = 0.50]
                              ({\x}, {\x^2});
    \draw [color=mygreen,ultra thick]
    (0,0) -- (1,1);
    \end{axis}
    \end{tikzpicture}
  \begin{tikzpicture}
      \begin{axis}[view={65}{50},
    width=3in,
    height=3in,
    axis lines=center,
    xlabel=$x_1$,ylabel=$x_2$,
    xticklabel=\empty,
    yticklabel=\empty,
    ylabel style={above right},
    xlabel style={below right},
    xtick={0},
	ytick={0},
    xmin=-0.7,xmax=0.7,ymin=-0.7,ymax=0.7]
    \addplot+[no markers, fill, samples=200, samples y=0, domain=-0.5001:0.5002, variable=\x, style=ultra thick, color = mygreen, fill , fill opacity = 0.50]
                              ({\x}, {-\x^2 + \x + 1/4});
    \addplot+[no markers, color = mygreen, fill, fill opacity = 0.50, samples=200, samples y=0, domain=-0.5:0.5, variable=\x, style=ultra thick]
                              ({\x}, {\x^2 + \x - 1/4});
    \end{axis}
    \end{tikzpicture}
    \caption{Left: the convex hull of the moment curve $\bx(t) = (t,t^2)$ for $t\in [0,1]$. Right: the zonoid $\mathcal{Z}(\bx')$.}
    \label{fig:moment2d}
\end{figure}
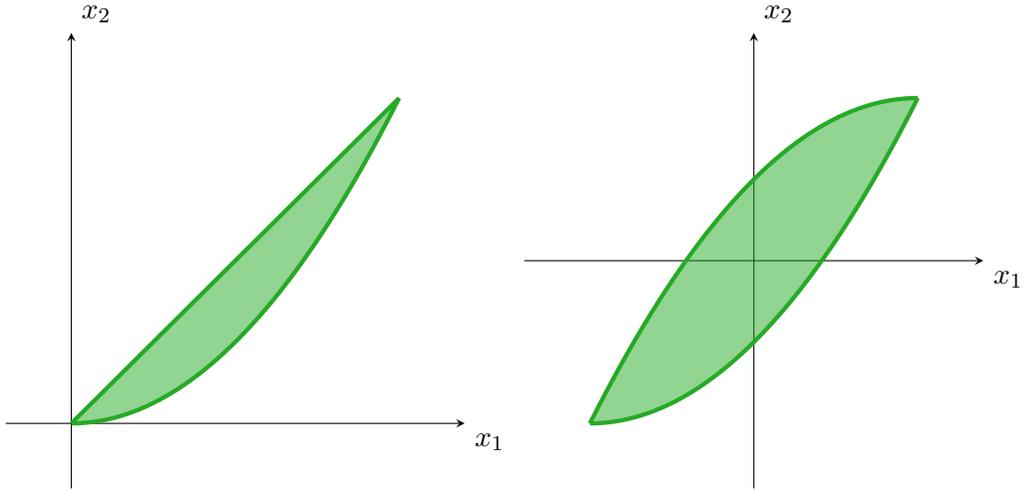
   A computation shows that $\vol ( \mathcal{Z}(\bx') ) = \frac{1}{3} = 2 \cdot \vol ( \conv (\bx))$.
\end{example}

\begin{remark}
    Let $\bx \subset \R^d$ be a piecewise linear cyclic curve parametrized by
    \begin{equation}
        \bx(t) = (a^{(i)}_1 t + b^{(i)}_1, \ldots , a^{(i)}_d t + b^{(i)}_d) \qquad \hbox{ for } t\in [m_i,M_i]
    \end{equation}
    for $N$ vectors $a^{(i)},b^{(i)} \in \R^d$, and $0 = m_1 < \ldots < M_{i-1} = m_i < \ldots < M_N = 1$ in $[0,1]$. 
    Then, $\mathcal{Z}(\bx')$ has support function $
        h_{\mathcal{Z}(\bx')}(u) = \sum_{i=1}^N \frac{M_i - m_i}{2} |\langle u, a^{(i)} \rangle|$.
    Therefore, $\mathcal{Z}(\bx')$ is the zonotope
    $$
    \mathcal{Z}(\bx') = \sum_{i=1}^N \left[ -\frac{(M_i - m_i)}{2} a^{(i)} , \frac{(M_i - m_i)}{2} a^{(i)} \right].
    $$
\end{remark}

\subsection{The logarithmic curve}
An example of a curve for which we can use the volume formula is the \emph{logarithmic curve}.
Fix $d$ distinct non-negative real numbers $a_1, a_2, \ldots , a_d$ and consider the associated logarithmic curve, or in short log-curve, parameterized by 
\begin{align}\label{eq:log-curve}
\bx : [0,1] &\to \R^d, \quad
t \, \mapsto 
\begin{pmatrix}
\log(1+a_1 t)\\
\log(1+a_2 t)\\
\vdots \\
\log(1+a_d t)
\end{pmatrix}.
\end{align}
We prove that log-curves are $d$-order, and therefore they are cyclic. 
\begin{lemma}
    The log-curve in \eqref{eq:log-curve} is a $d$-order curve.
\end{lemma}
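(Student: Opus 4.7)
The plan is to use the intersection characterization in Definition~\ref{def:d_order}: I will show that any affine hyperplane of $\R^d$ meets the image of the log-curve in at most $d$ points, and then address the sign. Concretely, fix real constants $c_0,c_1,\ldots,c_d$ with $(c_1,\ldots,c_d)\neq 0$ and set
\[
f(t) \,=\, c_0 + \sum_{i=1}^d c_i \log(1+a_i t),
\]
so that the points $\bx(t_0),\ldots,\bx(t_d)$ are affinely dependent exactly when $f$ vanishes at each $t_j$. Thus it is enough to prove that $f$ has at most $d$ zeros on $[0,1]$. The case $a_i=0$ would place the curve inside the hyperplane $\{x_i=0\}$ and degenerate the determinant, so from here on I assume $a_i>0$.

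The core step is a Rolle-theorem argument. If $f$ had at least $d+1$ zeros in $[0,1]$, then
\[
f'(t) \,=\, \sum_{i=1}^d \frac{c_i a_i}{1+a_i t}
\]
would have at least $d$ zeros in $(0,1)$. Multiplying by the positive quantity $\prod_{j=1}^d(1+a_j t)$ yields the polynomial
\[
g(t) \,=\, \sum_{i=1}^d c_i a_i \prod_{j\neq i}(1+a_j t),
\]
which has degree at most $d-1$ and the same zeros on $(0,1)$ as $f'$. Since $g$ has at least $d$ zeros but degree at most $d-1$, it is identically zero.

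To finish the zero count, observe that the polynomials $\prod_{j\neq i}(1+a_j t)$, $i=1,\ldots,d$, are nonzero scalar multiples of the Lagrange basis polynomials at the distinct nodes $-1/a_1,\ldots,-1/a_d$, hence linearly independent in $\R[t]_{\leq d-1}$. Therefore $g\equiv 0$ forces $c_i a_i=0$, so $c_i=0$ for every $i$, contradicting $(c_1,\ldots,c_d)\neq 0$. For the strict positivity in \eqref{eq:det_dorder}, the determinant is continuous and nowhere vanishing on the connected domain $\{0\leq t_0<\cdots<t_d\leq 1\}$, so has constant sign; this sign can be pinned down by substituting the Taylor expansion $\log(1+a_i t)=a_i t-\tfrac{1}{2}(a_i t)^2+\cdots$ and reducing via Cauchy–Binet to a Vandermonde determinant in $a_1,\ldots,a_d$, which is positive for the appropriate ordering of the $a_i$ (otherwise an even number of coordinate reflections restores the sign without changing the geometry). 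The main obstacle in a full write-up is this sign bookkeeping; the analytic content is the classical fact that $\{1,\log(1+a_1t),\ldots,\log(1+a_d t)\}$ is a Chebyshev system on $[0,1]$.
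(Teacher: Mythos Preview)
Your argument follows the same route as the paper's: assume an affine dependence among $d+1$ points on the curve, encode it as a function $f(t)=c_0+\sum c_i\log(1+a_it)$ with $d+1$ zeros, apply Rolle to $f'$, and clear denominators to get a polynomial of degree at most $d-1$. Your write-up is in fact slightly more careful than the paper's in two places: you handle the degenerate case $g\equiv 0$ explicitly via the linear independence of the polynomials $\prod_{j\neq i}(1+a_jt)$ (the paper simply asserts $g$ has degree $d-1$), and you allow a general constant $c_0$ rather than assuming the row of $1$'s lies in the span of the logarithmic rows. The sign discussion you add is not in the paper, which is content to verify the hyperplane-intersection form of Definition~\ref{def:d_order}; your continuity-on-a-connected-domain argument is the right way to handle it, though the Cauchy--Binet/Vandermonde sketch for pinning down the sign would need a couple more lines to be airtight (and ``coordinate reflections'' should really be ``reordering the $a_i$'' or a single reflection).
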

\begin{proof}
Consider $0\leq t_0<\ldots<t_d\leq 1$ and take the associated points $\bx(t_i)$ on the curve. We need to prove that the following $(d+1)\times (d+1)$ matrix
\begin{equation}\label{eq:matrix_cyclic}
\begin{bmatrix}
\bx(t_0) & \ldots & \bx(t_d) \\
1 & \ldots & 1
\end{bmatrix}
\end{equation}
is not singular.
Suppose by contradiction that it is singular. Hence, there exist $c_1,\ldots, c_d \in \R$ such that
\begin{equation}\label{eq:linear_combination_log}
c_1 \log(1+a_1 t_i) + \ldots + c_d \log(1+a_d t_i) \,\,=\,\, 1
\end{equation}
for all $i=0,\ldots,d$. Let us then study the function
\begin{equation}
f(t) \,\,=\,\, c_1 \log(1+a_1 t) + \ldots + c_d \log(1+a_d t) - 1.
\end{equation}
Equation \eqref{eq:linear_combination_log} implies that $f$ has at least $d+1$ distinct roots in the interval $[0,1]$, where it is differentiable. However,
\begin{equation}
    f'(t) \,\,=\,\, \frac{g(t)}{(1+a_1 t)\cdot \ldots \cdot (1+a_d t)}
\end{equation}
where $g(t)$ is a polynomial of degree $d-1$.
Hence $f'$ has at most $d-1$ roots on $\R$. This implies that $f$ has at most $d$ roots in the interval $[0,1]$, which gives a contradiction, since $t_i\neq t_j$ for $i\neq j$. Therefore, no $d+1$ points of $\bx$ belong to any hyperplane, and hence the log-curve is a $d$-order curve.
\end{proof}

We thus have that $\bx \in \Cyc^d$, so we can compute the volume of its convex hull using the signature formula from Theorem \ref{thm:cyclicvolformula}. Explicitly, for $d=3$ and distinct non-negative parameters $a,b,c \neq 0$, we obtain
\begin{align}
    \vol (\conv (\bx)) &= \frac{1}{6} \int_0^1\! \int_0^w\! \int_0^v \frac{a b c}{(1 + a u) (1 + b v) (1 + c w)} - \frac{a b c}{(1 + b u) (1 + a v) (1 + c w)} \\
    & \qquad - \frac{a b c}{(1 + a u) (1 + c v) (1 + b w)} + \frac{a b c}{(1 + c u) (1 + a v) (1 + b w)} \\
    & \qquad + \frac{a b c}{(1 + b u) (1 + c v) (1 + a w)} - \frac{a b c}{(1 + c u) (1 + b v) (1 + a w)} \: \mathrm{d}u\, \mathrm{d}v\, \mathrm{d}w \\
    &= G(-\frac{1}{a}, -\frac{1}{b}, -\frac{1}{c} ; 1) - G(-\frac{1}{b}, -\frac{1}{a}, -\frac{1}{c} ; 1) + G(-\frac{1}{a}, -\frac{1}{c}, -\frac{1}{b} ; 1) \\
    & \qquad - G(-\frac{1}{c}, -\frac{1}{a}, -\frac{1}{b} ; 1) + G(-\frac{1}{b}, -\frac{1}{c}, -\frac{1}{a} ; 1) - G(-\frac{1}{c}, -\frac{1}{b}, -\frac{1}{a} ; 1),
\end{align}
where $G(z_1,\ldots,z_3;1)$ is a multiple polylogarithm, as defined in \cite[Section 8.1]{Weinzierl:FeynmanIntegrals}. Therefore, the volume of the convex hull of the log-curve $\bx$ is a combination of multiple polylogarithms. The same holds in higher dimensions: 
\begin{equation}
    \vol (\conv (\bx)) = \sum_{\tau \in \Sigma_d} \sgn(\tau) G\left(-\frac{1}{a_{\tau(1)}}, -\frac{1}{a_{\tau(2)}}, \ldots, -\frac{1}{a_{\tau(d)}} ; 1\right)
\end{equation}
where $\bx$ is a log-curve in $\R^d$ with parameters $a_1,\ldots,a_d$.

\subsection{Towards a necessary condition}

Since we have extended the class of curves for which the volume of their convex hulls equals the alternating signature, it is natural to wonder whether the sufficient condition of being cyclic is also a necessary condition for the volume formula to hold. In other words, we would like for the converse of Theorem~\ref{thm:cyclicvolformula} to hold. Unfortunately, this is not the case, as the following example shows.

\begin{example}
Consider the curve in Figure~\ref{fig:counterex}. 
\begin{figure}
    \centering
  \begin{tikzpicture}
  \path [draw=mygreen,ultra thick,postaction={on each segment={mid arrow=black}}]
  (0,0) -- (3,0) -- (3,2) -- (0,2) -- (0,0)
  (3,0) -- (5,0) -- (5,6) -- (4,6) -- (4,4) -- (1,4) -- (1,6) -- (0,6) -- (0,0);
\end{tikzpicture}
    \caption{Non-cyclic curve for which the convex hull volume formula holds.}
    \label{fig:counterex}
\end{figure}
Here the path $\bx$ starts (end ends) on the bottom left, traces the small cycle and then goes around the outer loop. Note that the convex hull $\conv(\bx)$ is a solid rectangle, whose area will equal the sum of the areas enclosed by each of the two loops. The latter is precisely the alternating signature, so
\[
        \vol(\conv(\bx)) = \alpha^{(2)}(\bx).
    \]
However, the curve $\bx$ is not cyclic because it does not have the following property.
\end{example}
\begin{lemma}
    Let $\bx\in\Cyc^d$. Then $\bx \subset \partial \conv(\bx)$.
\end{lemma}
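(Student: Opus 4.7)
The plan is to prove the statement first for $d$-order curves and then pass to the closure $\Cyc^d$ by continuity of the convex hull operation. For the first step, fix $\bx \in \Ord^d$ and $s \in [0,1]$; I would argue that $\bx(s)$ is in fact an extreme point of $\conv(\bx)$, which is stronger than being on the boundary. Suppose $\bx(s) = \tfrac{1}{2}(a+b)$ with $a, b \in \conv(\bx)$. By Caratheodory's theorem, $a$ and $b$ can each be written as convex combinations of finitely many points $\bx(u_j)$ and $\bx(v_k)$ on the curve. Adjoining $s$ to the parameter set $\{u_j\} \cup \{v_k\}$ yields a finite ordered collection whose images form the vertex set of a cyclic polytope $P$, by the property of $d$-order curves recalled after Definition~\ref{def:d_order}. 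Since $a,b \in P$ and $\bx(s)$ is a vertex, hence an extreme point, of $P$, we conclude $a = b = \bx(s)$. Therefore $\bx(s)$ is extreme in $\conv(\bx)$, and in particular $\bx(s) \in \partial \conv(\bx)$.

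For the second step, write $\bx = \lim_{k\to\infty} \bx_k$ uniformly with $\bx_k \in \Ord^d$. Suppose for contradiction that $\bx(s) \in \operatorname{int}(\conv(\bx))$ for some $s$, and choose $\epsilon > 0$ so that the ball $B(\bx(s), \epsilon)$ is contained in $\conv(\bx)$. Continuity of the convex hull under uniform convergence~\cite[Section 1.8]{Schneider:BrunnMinkowskiTheory} gives $\conv(\bx_k) \to \conv(\bx)$ in Hausdorff distance, and a standard Minkowski inner-parallel-body estimate therefore yields $B(\bx(s), \epsilon/2) \subset \conv(\bx_k)$ for all sufficiently large $k$. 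Since $\bx_k(s) \to \bx(s)$, for such $k$ the point $\bx_k(s)$ lies strictly inside this ball, and hence inside $\operatorname{int}(\conv(\bx_k))$, contradicting Step~1 applied to $\bx_k$.

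The main obstacle is the reduction carried out in Step~1: an a priori statement about the boundary of a convex hull of a continuum of points is collapsed, via Caratheodory, to a purely combinatorial statement about vertices of a single finite cyclic polytope, and from there to the elementary fact that vertices are extreme. The only property of the class $\Ord^d$ being used is precisely the one highlighted by the authors, namely that every finite subset of the image is the vertex set of a cyclic polytope. Once this is in place, the passage to $\Cyc^d$ in Step~2 is a soft Hausdorff-continuity argument and requires no new combinatorics.
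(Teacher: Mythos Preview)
Your proof is correct and follows the same two-step strategy as the paper: first establish that $d$-order curves lie on the boundary of their convex hull, then pass to the uniform limit via Hausdorff continuity of $\conv$. The paper's own argument is terser---it simply asserts Step~1 as a known property of $d$-order curves and invokes ``continuity of the limits'' for Step~2---whereas you supply a clean Carath\'eodory/cyclic-polytope reduction (in fact proving the stronger statement that every $\bx(s)$ is \emph{extreme}) and a careful inner-parallel-body contradiction for the limiting step.
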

\begin{proof}
    By definition, $\bx$ is a Lipschitz limit of $d$-order curves $\{\bx_k\}_{k=1}^\infty$, and therefore pointwise we have that $\bx(t) = \lim \bx_k(t)$. In particular this implies that, with respect to the Hausdorff metric, $\conv (\bx_k) \to \conv(\bx)$. Since every $d$-order curve is contained in the boundary of its convex hull, by the continuity of the limits also $\bx$ is contained in $\partial \conv(\bx)$.
\end{proof}

Notice that the condition $\bx \subset \partial \conv(\bx)$ is not enough for the volume formula to be true. Also strengthening this assumption to $\bx \subset \ext( \conv(\bx))$, where $\ext$ denotes the set of extreme points, does not work. Indeed, let $\bx$ be the curve parametrized by $[0,1]\ni t\mapsto (\cos 2\pi t,\sin 2\pi t, \cos 6\pi t)\in \R^3$.
Then, $\alpha^{(3)}(\bx) = 0$ whereas $\vol(\conv(\bx))\neq 0$.

Cyclic curves satisfy the property that any of their subpaths is also cyclic, and thus the volume formula holds for all subpaths of a cyclic curve. We conjecture that cyclic curves are the largest class of curves for which the signature formula holds for all subpaths. 
\begin{conjecture}\label{conj:subpaths}
    Let $\bx:[0,1]\to\R^d$ be a Lipschitz path. Then, $\bx$ is cyclic if and only if  $\vol(\conv(\widetilde{\bx}))=\alpha^{(d)}(\widetilde{\bx})$ holds for all restrictions $\widetilde{\bx}: [a,b] \rightarrow \R^d$ of $\bx$ to a subinterval $[a,b] \subset [0,1]$.
\end{conjecture}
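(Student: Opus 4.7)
If $\bx = \lim_{k \to \infty} \bx_k$ with $\bx_k \in \Ord^d$, then each restriction $\bx_k|_{[a,b]}$ is again $d$-order (condition \eqref{eq:det_dorder} is inherited on any subinterval), so $\widetilde{\bx} = \bx|_{[a,b]}$ is cyclic and Theorem \ref{thm:cyclicvolformula} gives the identity $\vol(\conv(\widetilde{\bx})) = \alpha^{(d)}(\widetilde{\bx})$ on every subinterval.

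\textbf{Reverse direction.} The plan is to argue by contrapositive. Note first that by Theorem \ref{thm:cyclicvolformula}, the hypothesis is equivalent to requiring both $\alpha^{(d)}(\widetilde{\bx}) \geq 0$ and $\vol(\conv(\widetilde{\bx})) = \alpha^{(d)}(\widetilde{\bx})$ on every subinterval. The task thus reduces to showing: if $\bx$ spans $\R^d$ and is not cyclic, some subpath has $\alpha^{(d)} < 0$. By the determinantal characterization of cyclicity for spanning curves recalled after Definition \ref{def:cyclic}, non-cyclicity yields $0 \leq t_0^* < \dots < t_d^* \leq 1$ with
\[
    D(t_0^*,\dots,t_d^*) \,\coloneqq\, \det\begin{pmatrix} 1 & \dots & 1 \\ \bx(t_0^*) & \dots & \bx(t_d^*) \end{pmatrix} \,<\, 0.
\]
I would first handle the piecewise linear case: for $\bx$ piecewise linear with vertices $p_0,\dots,p_N$, expanding the integral defining $\alpha^{(d)}$ interval-by-interval decomposes it into a signed sum of terms $\det(p_{i_1}-p_{i_1-1},\dots,p_{i_d}-p_{i_d-1})$ indexed by increasing multi-indices, each of which is (up to a factor) a signed simplex volume. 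By choosing a piecewise linear surrogate whose vertex structure reflects the $(d+1)$-tuple at the $t_i^*$, one aims to isolate a subpath in which the negative determinant dominates the signed sum. The Lipschitz case would then follow by uniform approximation, invoking stability of truncated signatures \cite[Proposition 7.63]{friz_multidimensional_2010} together with Hausdorff-continuity of the convex hull \cite[Section 1.8]{Schneider:BrunnMinkowskiTheory}.

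\textbf{Main obstacle and edge case.} The hardest step will be ensuring that on a suitable subpath the negative contribution of $(\bx(t_0^*),\dots,\bx(t_d^*))$ genuinely dominates $\alpha^{(d)}$: since $\alpha^{(d)}$ integrates over the entire ordered $d$-simplex, the negative contribution can in principle be cancelled by positive contributions from the rest of the trajectory. I expect to need a careful asymptotic expansion on a shrinking neighbourhood $[t_0^*-\epsilon,\, t_d^*+\epsilon]$, tracking how the error scales relative to the $\epsilon$-dependent leading contribution, possibly after an $\SO(d)$-rotation chosen so that the $\bx(t_i^*)$ lie close to a standard configuration (using the rotation invariance of $\alpha^{(d)}$ from Theorem \ref{thm:so_invariance}). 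A separate issue is the degenerate case where $\bx$ lies entirely in a proper affine subspace $V \subsetneq \R^d$: both $\vol(\conv(\widetilde{\bx}))$ and $\alpha^{(d)}(\widetilde{\bx})$ vanish identically on every subpath, so the hypothesis is vacuous; cyclicity of $\bx$ in $\R^d$ must then be established by reducing to dimension $\dim V$, applying the conjecture inductively inside $V$, and lifting approximating $(\dim V)$-order curves in $V$ to $d$-order curves in $\R^d$ by a generic perturbation---the delicate part being to verify that the perturbation produces curves satisfying the strict determinant condition \eqref{eq:det_dorder} in the ambient space.
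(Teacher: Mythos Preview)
This statement is a \emph{conjecture} in the paper; no proof of the reverse implication is given there, so there is nothing to compare your reverse-direction argument against. Your forward direction is correct and is exactly what the paper observes in the paragraph preceding the conjecture: restrictions of $d$-order curves are $d$-order, hence restrictions of cyclic curves are cyclic, and Theorem~\ref{thm:cyclicvolformula} applies.

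The genuine gap is in your reverse direction. Your reduction ``if $\bx$ spans $\R^d$ and is not cyclic, some subpath has $\alpha^{(d)}<0$'' is false, so the entire strategy built on it collapses. Take $d=2$ and $\bx(t)=(\cos 4\pi t,\sin 4\pi t)$, the circle traversed twice. A direct computation gives, for any $0\le a<b\le 1$ with $\phi\coloneqq 4\pi(b-a)$,
\[
\alpha^{(2)}\bigl(\bx|_{[a,b]}\bigr)=\tfrac{1}{2}\bigl(\phi-\sin\phi\bigr)\ge 0,
\]
so \emph{every} subpath has nonnegative alternating signature. Yet $\bx$ is not cyclic: with $t_0=\tfrac18,\,t_1=\tfrac12,\,t_2=\tfrac78$ one gets $\bx(t_0)=(0,1)$, $\bx(t_1)=(1,0)$, $\bx(t_2)=(0,-1)$ and the determinant in \eqref{eq:det_dorder} equals $-2<0$, so by the characterisation recalled after Definition~\ref{def:cyclic} the curve is not cyclic. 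The volume formula does fail on subpaths here, but in the opposite direction: on $[0,1]$ one has $\alpha^{(2)}(\bx)=2\pi>\pi=\vol(\conv(\bx))$. Any contrapositive argument must therefore detect overcounting ($\alpha^{(d)}>\vol$) as well as sign reversal, and your piecewise-linear/shrinking-neighbourhood scheme is aimed only at producing a negative contribution. The degenerate-subspace discussion and the approximation machinery are reasonable scaffolding, but they rest on a reduction that does not hold.
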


\section{Lower-level decomposition}

The path signature is equipped with a shuffle algebra structure, and this induces a decomposition of the higher level alternating signature into first and second level terms. Furthermore, we can reinterpret the signature volume formula in terms of signed areas.
We begin with the decomposition of the alternating signature from~\cite{DR:InvariantsMultidimensional}.

\begin{lemma}[\cite{DR:InvariantsMultidimensional}, Lemma 3.17]
\label{thm:DR_decomposition}
    Let $\bx \in \Lip([0,1], \R^d)$ and suppose $k \leq d$.
    \begin{itemize}
        \item \textbf{($k$ odd).} Suppose $(P : [k] \rightarrow [d]) \in \cO_{k,d}$. For $i \in [k]$, let $P_i \in \cO_{k-1,d}$ be defined by
        \begin{align}
            P_i(r) \,\,=\,\, \begin{cases}
                    P(r) & \text{if } r < i, \\
                    P(r+1) &  \text{if } r \geq i.
                \end{cases}
        \end{align}
        Then
        \begin{align}
        \label{eq:alternating_decomp_odd}
            \alpha_P(\bx) \,\,=\,\, \frac{1}{k}\sum_{i=1}^k(-1)^{i+1} \sigma_{P(i)}(\bx) \cdot \alpha_{P_i}(\bx).
        \end{align}

        \item \textbf{($k$ even).} Suppose $(P: [k] \rightarrow [d]) \in \cO_{k,d}$. Then
        \begin{align}
        \label{eq:alternating_decomp_even}
            \alpha_P(\bx) \,\,=\,\, \frac{1}{k!(k/2)!}\sum_{\tau \in \Sigma_k} \sgn(\tau) \prod_{r=1}^{k/2} \alpha_{P(\tau(2r-1)), P(\tau(2r))}(\bx).
        \end{align}
    \end{itemize}
\end{lemma}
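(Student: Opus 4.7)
The key tool is the shuffle product identity $\sigma_I(\bx)\cdot\sigma_J(\bx) = \sum_K \sigma_K(\bx)$, where the sum is over all interleavings $K$ of $I$ and $J$, combined with the defining formula $\alpha_P(\bx) = \frac{1}{k!}\sum_{\tau\in\Sigma_k}\sgn(\tau)\,\sigma_{P\circ\tau}(\bx)$. In both parities, the plan is to expand the right-hand side as a signed combination of $\sigma_{P\circ\tau}(\bx)$ and to match coefficients against this definition.

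\textbf{Odd case.} First, I would substitute the antisymmetrization of $\alpha_{P_i}$ into the right-hand side of \eqref{eq:alternating_decomp_odd}, obtaining
$$\sigma_{P(i)}(\bx)\cdot\alpha_{P_i}(\bx) \,=\, \frac{1}{(k-1)!}\sum_{\tau'\in\Sigma_{k-1}}\sgn(\tau')\,\sigma_{P(i)}(\bx)\cdot\sigma_{P_i\circ\tau'}(\bx).$$
The shuffle of the single letter $P(i)$ with the length-$(k-1)$ word $P_i\circ\tau'$ is a sum over insertion slots $m\in\{0,\ldots,k-1\}$, and each resulting word is of the form $P\circ\tau$ for a unique $\tau\in\Sigma_k$ with $\tau(m+1)=i$. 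This sets up a bijection between triples $(i,\tau',m)$ and pairs $(\tau,m)$. The key combinatorial identity is the sign relation
$$\sgn(\tau')\,=\,(-1)^{m+k+i}\sgn(\tau),$$
obtained by counting inversions created when inserting the value $i$ at slot $m+1$ into the induced permutation of $[k]\setminus\{i\}$. Substituting makes $(-1)^{i+1}\sgn(\tau')=(-1)^{m+k+1}\sgn(\tau)$ \emph{independent} of both $i$ and $\tau'$. The outer sum $\sum_{m=0}^{k-1}(-1)^{m+k+1}$ equals $1$ precisely because $k$ is odd, and the remaining factor $\frac{1}{(k-1)!}\sum_\tau \sgn(\tau)\sigma_{P\circ\tau}(\bx)$ equals $k\cdot\alpha_P(\bx)$, yielding the claim after dividing by $k$.

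\textbf{Even case.} For $k$ even, I would follow the same strategy with an additional layer of expansion. Writing each length-$2$ factor as
$$\alpha_{P(\tau(2r-1)),P(\tau(2r))}\,=\,\tfrac{1}{2}\bigl(\sigma_{P(\tau(2r-1)),P(\tau(2r))}-\sigma_{P(\tau(2r)),P(\tau(2r-1))}\bigr),$$
iterating the shuffle product rewrites the product as a signed sum of $\sigma_{P\circ\rho}(\bx)$ over $\rho\in\Sigma_k$. The aim is to show that, after summing over $\tau\in\Sigma_k$ with $\sgn(\tau)$, each $\sigma_{P\circ\rho}(\bx)$ appears with coefficient $\sgn(\rho)$ times a universal constant that is exactly cancelled by the normalization $\frac{1}{k!(k/2)!}$, so that the right-hand side collapses to $\alpha_P(\bx)$.

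\textbf{Main obstacle.} The substantive difficulty is the combinatorial sign accounting in the even case. Each instance of $\sigma_{P\circ\rho}$ arises from a triple consisting of (i) a pairing of the $k$ slots, (ii) an orientation inside each of the $k/2$ pairs, and (iii) a shuffle interleaving of the $k/2$ length-$2$ words into a length-$k$ word; these are weighted by the multiplicities $(k/2)!$, $2^{k/2}$, and $k!/(2!)^{k/2}$ respectively. One must verify that the $\sgn(\tau)$, the $\pm$ signs from orientations, and the implicit $\sgn(\rho)$ combine so that every fiber of the map $(\tau,\text{orient},\text{shuffle})\mapsto\rho$ contributes identically, reducing the claim to a multiplicity count against the prefactor. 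This is essentially a Pfaffian-type identity in the shuffle algebra, and the strategy from the odd case—fix $\rho$, parametrise the contributing data, and show the sign is constant on each fiber—transfers directly, albeit with more intricate bookkeeping.
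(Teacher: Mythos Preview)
The paper does not prove this lemma; it is quoted from \cite{DR:InvariantsMultidimensional} and stated without proof, as the immediately following remark makes explicit (noting only that the normalization and phrasing differ from the original). So there is no in-paper argument to compare your proposal against.

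That said, your shuffle-algebra approach is exactly the method used in the cited source, so in that sense your proposal aligns with the intended proof. Your odd-case computation is correct: the cofactor sign relation is $\sgn(\tau')=(-1)^{m+1+i}\sgn(\tau)$, which for $k$ odd agrees with your $(-1)^{m+k+i}$; the alternating sum over insertion slots then collapses to $1$ precisely because $k$ is odd. For the even case you correctly recognize the formula as a Pfaffian identity in the shuffle algebra, and the strategy you sketch (fix $\rho$, parametrize the fiber by pairing $\times$ orientation $\times$ shuffle, show the sign is constant) is sound. The multiplicity count $(k/2)!\cdot 2^{k/2}\cdot k!/2^{k/2}=k!\,(k/2)!$ you outline matches the prefactor, so the bookkeeping will close, though you have not written it out in full.
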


\begin{remark}
    The form of the result in~\cite[Lemma 3.17]{DR:InvariantsMultidimensional} and stated here is slightly different. In~\cite{DR:InvariantsMultidimensional}, results are stated in terms of shuffles of the indices, and the alternating signature is \emph{not} normalized by $\frac{1}{k!}$.
\end{remark}

Combining the two formulae in Lemma \ref{thm:DR_decomposition}, we can rewrite the odd-level alternating signature in terms of level $1$ and $2$ signature terms.

\begin{corollary}
\label{cor:odd_DR_decomposition}
    Let $\bx \in \Lip([0,1], \R^d)$, suppose $k \leq d$ is odd and $P \in \cO_{k,d}$. Then
    \begin{align}
    \label{eq:alternating_decomp_odd2}
        \alpha_P(\bx) \,\,=\,\, \frac{1}{k!\left(\frac{k-1}{2}\right)!} \sum_{\tau \in \Sigma_k} \sgn(\tau) \sigma_{P(\tau(1))}(\bx) \prod_{r=1}^{(k-1)/2} \alpha_{P_i(\tau(2r)), P_i(\tau(2r+1))}(\bx). 
    \end{align}
\end{corollary}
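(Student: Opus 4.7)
The plan is to derive \eqref{eq:alternating_decomp_odd2} by substituting the even-level decomposition \eqref{eq:alternating_decomp_even} into the odd-level decomposition \eqref{eq:alternating_decomp_odd}. Since $k$ is odd, $k-1$ is even, so the even formula applies to each $\alpha_{P_i}(\bx)$ appearing on the right-hand side of \eqref{eq:alternating_decomp_odd}. After substitution the normalizing constants combine via $k \cdot (k-1)! = k!$, producing the target prefactor $\tfrac{1}{k!((k-1)/2)!}$ and leaving a double sum over $i \in [k]$ and $\tau' \in \Sigma_{k-1}$, with signs $(-1)^{i+1}\sgn(\tau')$.

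The remaining work is to recognize this double sum as a single sum over $\Sigma_k$. For this, I would introduce the order-preserving bijection $\phi_i : [k-1] \to [k]\setminus\{i\}$ (so that $P_i = P \circ \phi_i$), and define $\Phi : [k]\times \Sigma_{k-1} \to \Sigma_k$ by $\Phi(i,\tau')(1) = i$ and $\Phi(i,\tau')(j+1) = \phi_i(\tau'(j))$ for $j \in [k-1]$. It is immediate that $\Phi$ is a bijection. Moreover $\phi_i(\tau'(2r-1)) = \Phi(i,\tau')(2r)$ and $\phi_i(\tau'(2r)) = \Phi(i,\tau')(2r+1)$, so the arguments of the level-two alternating signatures line up with those in the target formula, where the symbol $P_i$ is read as $P$ restricted to $[k]\setminus\{\tau(1)\}$.

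The crux is the sign identity $\sgn(\Phi(i,\tau')) = (-1)^{i-1}\sgn(\tau') = (-1)^{i+1}\sgn(\tau')$. I would verify this by factoring $\Phi(i,\tau') = \rho_i \circ \widetilde{\tau}'$, where $\rho_i \in \Sigma_k$ is the ``cyclic shift'' with $\rho_i(1) = i$ and $\rho_i$ order-preserving on $\{2,\ldots,k\} \to [k]\setminus\{i\}$, and $\widetilde{\tau}' \in \Sigma_k$ fixes $1$ and acts as $\tau'$ (under the shift $j \leftrightarrow j+1$) on $\{2,\ldots,k\}$. Then $\rho_i$ is a product of $i-1$ adjacent transpositions, giving $\sgn(\rho_i) = (-1)^{i-1}$, while $\sgn(\widetilde{\tau}') = \sgn(\tau')$ by construction. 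The main obstacle is this combinatorial bookkeeping; no deeper idea is needed beyond what Lemma \ref{thm:DR_decomposition} already provides, and the identity \eqref{eq:alternating_decomp_odd2} drops out once the reindexing is performed.
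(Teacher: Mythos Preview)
Your proposal is correct and follows essentially the same route as the paper: substitute \eqref{eq:alternating_decomp_even} into \eqref{eq:alternating_decomp_odd}, then reindex the double sum over $(i,\tau')\in[k]\times\Sigma_{k-1}$ as a single sum over $\Sigma_k$ via the bijection $\Phi$ you describe, which is exactly the paper's construction of the new permutation. Your sign argument via $\rho_i\circ\widetilde{\tau}'$ is the same observation the paper phrases as ``composition of $\tau$ with a cycle of length $i$''; you have also correctly noted that the $P_i$ appearing in the final displayed formula should be read as $P$ applied to indices in $[k]$, which is a notational slip in the statement.
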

\begin{proof}
    By applying the odd formula followed by the even formula, we obtain
    \begin{align*}
        \alpha_P(\bx) \,\,=\,\, \frac{1}{k!\left(\frac{k-1}{2}\right)!} \sum_{i=1}^k \sum_{\tau \in \Sigma_{k-1}} (-1)^{i+1}\sgn(\tau) \sigma_{P(i)}(\bx) \prod_{r=1}^{(k-1)/2} \alpha_{P_i(\tau(2r-1)), P_i(\tau(2r))}(\bx). 
    \end{align*}
    Given some $i \in [k]$ and $\tau \in \Sigma_{k-1}$, we define a new permutation $\tau' \in \Sigma_k$ by
    \begin{align*}
        \tau'(r)  \,\,=\,\, \begin{cases} 
                i & \text{if } r = 1, \\
                \tau(r-1) & \text{if } 1 <r \leq k, \, \tau(r-1) < i, \\
                \tau(r-1)+1 & \text{if } 1 < r \leq k, \, \tau(r-1) \geq i.
\end{cases}
    \end{align*}
    Note that
    \begin{align*}
        \sgn(\tau') \,\,=\,\, (-1)^{i+1}\sgn(\tau)
    \end{align*}
    since we can write $\tau'$ as the composition of $\tau$ with a cycle of length $i$. We then have
    \begin{align}
        \alpha_P(\bx) \,\,=\,\, \frac{1}{k!\left(\frac{k-1}{2}\right)!} \sum_{\tau' \in \Sigma_k} \sgn(\tau') \sigma_{P(\tau'(1))}(\bx) \prod_{r=1}^{(k-1)/2} \alpha_{P_i(\tau'(2r)), P_i(\tau'(2r+1))}(\bx). 
    \end{align}
\end{proof}

\subsection{Volumes in terms of signed areas}
In this section, we discuss how the alternating signature volume formula can be interpreted as a product of signed areas (in the even case) times an additional displacement (in the odd case). This is done by decomposing the top degree alternating signature $\alpha^{(d)}(\bx)$ into level $1$ and $2$ components, as shown in Lemma~\ref{thm:DR_decomposition} and Corollary~\ref{cor:odd_DR_decomposition}. We begin by reducing the odd dimensional setting to the even dimensional setting.\medskip

Suppose $\bx = (x_1, \ldots, x_d): [0,1] \rightarrow \R^d$, where $d=2n+1$ and $\bx(1) \neq \bx(0)$. Without loss of generality (due to the $SO(d)$ invariance of the alternating signature), we suppose that $\bx(1) - \bx(0)$ is restricted to the $n+1$ coordinate. Then, the decomposition in Equation~\ref{eq:alternating_decomp_odd} is
\[
    \alpha^{(d)}(\bx) = \frac{1}{d!} (x_d(1) - x_d(0))\cdot\alpha^{(2n)}(\obx),
\]
where $\obx = (x_1, \ldots, x_{2n}) : [0,1] \rightarrow \R^{2n}$. Thus, it remains to interpret the top-level alternating signature of even-dimensional paths.

Now, suppose $\bx: [0,1] \rightarrow \R^d$, where $d=2n$ is even. Because $\alpha^{(2)}(\bx)$ is a antisymmetric matrix, it has purely imaginary eigenvalues which come in conjugate pairs. By applying a (complex) rotation between each of the conjugate pairs, we can block-diagonalize the matrix $\alpha^{(2)}$ as follows:
\begin{align}
\label{eq:block_diagonalize}
    \alpha^{(2)}(\bx) \,\,=\,\, Q \Lambda Q^T,
\end{align}
where
\begin{align}
\label{eq:block_diag}
    \Lambda \,\,=\,\, \begin{pmatrix} 0 &  \lambda_1 & 0 & 0 & \cdots & 0 & 0\\
    -\lambda_1 & 0 & 0 &0 & \cdots & 0 & 0\\
    0 & 0 & 0 & \lambda_2 & \cdots & 0 & 0 \\
    0 & 0 & -\lambda_2 & 0 & \cdots & 0 &0 \\
    \vdots & \vdots & \vdots & \vdots & &\vdots & \vdots \\
    0 &0& 0 &0 & \cdots & 0 & \lambda_n \\
    0 & 0 &0 &0 & \cdots & -\lambda_n & 0 \end{pmatrix}
\end{align}
and $Q$ is the orthogonal matrix of eigenvectors
\begin{align}
    Q \,\,=\,\, \begin{pmatrix} \bv_1 & \bv_2 & \cdots & \bv_{2n-1} & \bv_{2n} \end{pmatrix}.
\end{align}
Here, $\bv_{2k-1}$ and $\bv_{2k}$ are the real and imaginary parts of the conjugate pair of eigenvectors for $i\lambda_k$ and $-i\lambda_k$ (here $i$ is the imaginary number, not an index). Furthermore, we can choose $Q$ to be in $\SO(d)$. This idea of diagonalizing the signed area matrix was considered in~\cite{BS:CyclicityMultivariate}. By equivariance of the path signature, we then have
\[
    \alpha^{(2)}(Q^T\bx) = \Lambda.
\]
By the $SO(d)$-invariance of the top-level alternating signature (Theorem \ref{thm:so_invariance}), the decomposition in Equation~\ref{eq:alternating_decomp_even} can be written as
\[
    \alpha^{(2n)}(\bx) = \frac{(-1)^{n}}{(2n)! \cdot n!} \prod_{k=1}^n \lambda_k.
\]

\begin{theorem}\label{thm:vol_eigenvalues}
    Let $\bx = (x_1, \ldots, x_d): [0,1] \rightarrow \R^d$ be a cyclic curve. If $d=2n$, then
    \begin{align}
        \vol(\conv(\bx)) = \frac{(-1)^{n}}{(2n)! \cdot n!} \prod_{k=1}^n \lambda_k,
    \end{align}
    where $\lambda_k$ are the entries of $\Lambda$ in the decomposition \eqref{eq:block_diagonalize}. %are the eigenvalues from the block decomposition of $\alpha^{(2)}(\bx)$. 
    If $d=2n+1$ and the displacement $\bx(1) - \bx(0)$ is restricted to the $x_d$ coordinate ($x_k(1) - x_k(0)=0$ for $k \in [2n]$ and $x_d(1) - x_d(0) >0$), then
    \begin{align}
        \vol(\conv(\bx)) = \frac{(-1)^{n}}{(2n+1)! \cdot n!} (x_d(1) - x_d(0))\prod_{k=1}^n \lambda_k,
    \end{align}
    where $\lambda_k$ are the entries of $\Lambda$ in the decomposition \eqref{eq:block_diagonalize} of $\alpha^{(2)}(\obx)$ and $\obx = (x_1, \ldots, x_{2n}) : [0,1] \rightarrow \R^{2n}$.
\end{theorem}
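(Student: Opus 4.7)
The plan is to combine Theorem~\ref{thm:cyclicvolformula}, which identifies $\vol(\conv(\bx))$ with $\alpha^{(d)}(\bx)$ for cyclic curves, with the level-wise decomposition of $\alpha^{(d)}$ from Lemma~\ref{thm:DR_decomposition} (and its Corollary~\ref{cor:odd_DR_decomposition}). The two paragraphs preceding the statement already give the geometric outline; the proof will consist in assembling those steps cleanly and performing one Pfaffian-type combinatorial identity, with the odd case reducing directly to the even one.

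For the even case $d = 2n$, I would first block-diagonalize $\alpha^{(2)}(\bx) = Q \Lambda Q^T$ with $Q \in \SO(d)$ as in~\eqref{eq:block_diagonalize}--\eqref{eq:block_diag}. By equivariance of the level-$2$ signature we have $\alpha^{(2)}(Q^T \bx) = \Lambda$, while Theorem~\ref{thm:so_invariance} yields $\alpha^{(d)}(\bx) = \alpha^{(d)}(Q^T \bx)$. Applying the even formula of Lemma~\ref{thm:DR_decomposition} to $Q^T \bx$ with $k = d$ and $P = \mathrm{id}_{[d]}$ then gives
\begin{equation}
    \alpha^{(2n)}(\bx) \,\,=\,\, \frac{1}{(2n)! \, n!} \sum_{\tau \in \Sigma_{2n}} \sgn(\tau) \prod_{r=1}^n \Lambda_{\tau(2r-1), \tau(2r)}.
\end{equation}
Because $\Lambda$ has non-zero entries only at $\Lambda_{2k-1, 2k} = \lambda_k$ and $\Lambda_{2k, 2k-1} = -\lambda_k$, only those $\tau$ that send each slot pair $\{\tau(2r-1), \tau(2r)\}$ onto a single $2 \times 2$ block contribute. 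Such $\tau$ factor as an assignment of the $n$ blocks to the $n$ slots together with an intra-block orientation in each, and a careful sign count should collapse the signed sum to the closed form claimed in the statement.

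For the odd case $d = 2n+1$, I would apply the odd formula of Lemma~\ref{thm:DR_decomposition} with $k = d$ and $P = \mathrm{id}_{[d]}$. The hypothesis that $\bx(1) - \bx(0)$ is restricted to the $x_d$ direction gives $\sigma_j(\bx) = x_j(1) - x_j(0) = 0$ for every $j \in [2n]$, so all terms indexed by $i < d$ vanish. Only $i = d$ survives; since $P_d$ is then the identity on $[2n]$, the formula simplifies to $\alpha^{(d)}(\bx) = \frac{1}{2n+1}(x_d(1) - x_d(0)) \cdot \alpha^{(2n)}(\obx)$. Applying the even-case computation to $\alpha^{(2n)}(\obx)$ in $\R^{2n}$, via the block-diagonalization of $\alpha^{(2)}(\obx)$ and the $\SO(2n)$-invariance of the top-level alternating signature, then finishes the argument.

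The main obstacle I expect is the Pfaffian-type sign calculation in the even case: verifying that the signs arising from the inter-block permutation and the intra-block flips interact with the normalization $\frac{1}{(2n)! \, n!}$ to produce precisely the prefactor $\frac{(-1)^n}{(2n)! \, n!}$ of the statement. The geometric reductions (block-diagonalization, invariance under $\SO(d)$ and $\SO(2n)$, and the vanishing-increment cancellation in the odd case) are mechanical once the correct decomposition is invoked; it is this combinatorial bookkeeping in the collapsed Pfaffian-like sum that demands the most care.
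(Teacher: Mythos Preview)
Your proposal is correct and follows essentially the same route as the paper: the discussion immediately preceding the theorem \emph{is} its proof, and it too invokes Theorem~\ref{thm:cyclicvolformula}, the block-diagonalization~\eqref{eq:block_diagonalize}, the $\SO(d)$-invariance of Theorem~\ref{thm:so_invariance}, and the even/odd decompositions of Lemma~\ref{thm:DR_decomposition} in the same order. The paper simply asserts the collapsed Pfaffian identity without spelling out the sign bookkeeping you flag as the main obstacle (and your factor $\tfrac{1}{2n+1}$ in the odd reduction is the correct reading of~\eqref{eq:alternating_decomp_odd}), so your plan is in fact slightly more detailed on that point.
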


With the above decomposition, we can give a geometric interpretation of the volume formula for the convex hull of the curve $\bx$. Indeed, the volume can be rewritten as sums and products of $2$-dimensional areas and $1$-dimensional lengths. More precisely, up to rotation, the volume of the convex hull of a cyclic curve is the product of the distance between the start point and end point of $\bx$, if $\bx \in \R^d$ with $d$ odd, and $\lfloor{d/2}\rfloor$ signed areas of the projections of $\bx$ onto respective $2$-planes. We spell this out for the monotone curve in the next example.
\begin{example}\label{ex:dec_moment}
    Let $\bx \subset \R^3$ be the moment curve parametrized by $(t,t^2,t^3)$. We can apply the rotation 
    \begin{equation}
        Q = \frac{1}{\sqrt{6}}
        \begin{pmatrix}
            2 & -1 & -1 \\
            0 & \sqrt{3} & -\sqrt{3} \\
            \sqrt{2} & \sqrt{2} & \sqrt{2}
        \end{pmatrix}
    \end{equation}
    which sends the vector $\bx(1)=(1,1,1)$ to $(0,0,1)$.
    Then, the off-diagonal entry $\lambda_1$ of the associated matrix $\Lambda$ is given by
    \begin{equation}
        \frac{1}{2} \int_0^1 \int_0^{t_2} \frac{2-2t_1-3t_1^2}{\sqrt{6}} \frac{(2-3t_2)t_2}{\sqrt{2}} - \frac{2-2t_2-3t_2^2}{\sqrt{6}} \frac{(2-3t_1)t_1}{\sqrt{2}} \mathrm{d} t_1 \mathrm{d} t_2 = \frac{1}{30 \sqrt{3}}.
    \end{equation}
    This value is the signed area of the projection of $\bx$ onto the plane orthogonal to $(1,1,1)$, the curve shown in Figure \ref{fig:proj_moment}.
    \begin{figure}[ht]
    \centering
      \begin{tikzpicture}
      \begin{axis}[
        width=3.2in,
        height=2.35in,
        axis lines=center,
        xlabel=\empty,
        ylabel=\empty,
        xticklabel=\empty,
        yticklabel=\empty,
        ylabel style={above right},
        xlabel style={below right},
        xtick={0},
    	ytick={0},        xmin=-0.02,xmax=0.3,ymin=-0.01,ymax=0.12]
        \addplot+[no markers, samples=200, samples y=0, domain=0:1, variable=\t, style=ultra thick, color = mygreen]
                                  ({sqrt(2/3)*t - t^2/sqrt(6)-t^3/sqrt(6)}, {t^2/sqrt(2)-t^3/sqrt(2)});
        \end{axis}
        \end{tikzpicture}
        \caption{The projection of the moment curve from Example \ref{ex:dec_moment} onto the plane orthogonal to $(1,1,1)$.}
        \label{fig:proj_moment}
    \end{figure}
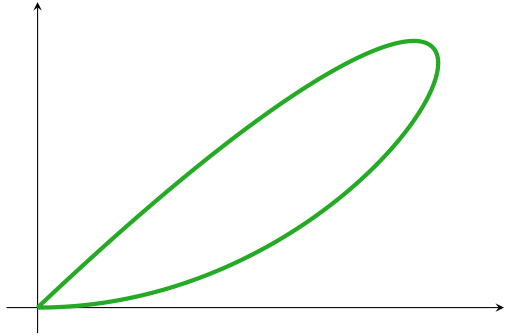
    Applying the odd version of Theorem \ref{thm:vol_eigenvalues} we get that
    $$\vol(\conv(\bx)) = \frac{1}{3!}\cdot \sqrt{3} \cdot \frac{1}{30\sqrt{3}} = \frac{1}{180}.$$
\end{example}

The interpretation above would seem to imply something incredible in odd dimension when the curve is closed, as then $x(0)=x(1)$ which gives a zero column that makes the whole determinant vanish. Hence, the formula would predict zero volume no matter the cyclic curve. This has an interesting geometric implication.

\begin{corollary}
There do not exist odd-dimensional closed cyclic curves. More generally, let $\bx\subset \R^d$ be a closed curve satisfying $\vol(\conv(\bx)) = \alpha^{(d)}(\bx)$. If $d$ is odd, then $\bx$ is contained in a hyperplane.
\end{corollary}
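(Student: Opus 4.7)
The plan is to leverage the odd-level decomposition in Corollary~\ref{cor:odd_DR_decomposition} together with the elementary fact that the first-level signature records the total displacement of the endpoints: $\sigma_i(\bx) = x_i(1) - x_i(0)$. For a closed curve, $\bx(0) = \bx(1)$, so every coordinate displacement vanishes, and this alone will be enough to kill the entire top-level alternating signature in odd dimension.

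The first step is to apply Corollary~\ref{cor:odd_DR_decomposition} at the top level $k = d$ with $P$ the identity injection in $\cO_{d,d}$. The decomposition expresses $\alpha^{(d)}(\bx)$ as a sum over $\tau \in \Sigma_d$ in which every summand carries a single level-one factor $\sigma_{P(\tau(1))}(\bx)$ multiplied by a product of signed-area terms. Since $\bx$ is closed, each such factor is zero, so the entire sum collapses to $\alpha^{(d)}(\bx) = 0$. Combining this with the hypothesis $\vol(\conv(\bx)) = \alpha^{(d)}(\bx)$ yields $\vol(\conv(\bx)) = 0$. A compact convex subset of $\R^d$ has zero $d$-dimensional Lebesgue measure if and only if its affine hull is a proper subspace, so $\conv(\bx)$ is contained in a hyperplane; because $\bx \subset \conv(\bx)$, the curve itself lies in that hyperplane, which proves the general (``more generally'') statement.

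The first sentence of the corollary is then immediate: every cyclic curve satisfies the signature volume formula by Theorem~\ref{thm:cyclicvolformula}, so the general statement applies, and any closed cyclic curve in odd dimension must live inside a hyperplane and therefore cannot genuinely span $\R^d$. The argument has no real technical obstacle, since the whole proof is driven by the structural feature that every term in the odd-case decomposition carries a displacement factor. The only subtle point is semantic, namely that ``odd-dimensional closed cyclic curve'' in the first sentence must be read as \emph{full-dimensional}, since the class $\Cyc^d$ explicitly allows curves to lie in lower-dimensional subspaces (as emphasized after Definition~\ref{def:cyclic}).
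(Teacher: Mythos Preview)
Your proof is correct and follows essentially the same approach as the paper: both exploit the odd-level decomposition (the paper via the rotated single-factor form in the discussion immediately preceding the corollary, you via Corollary~\ref{cor:odd_DR_decomposition} directly) to observe that every term carries a level-one displacement factor $\sigma_i(\bx)=x_i(1)-x_i(0)$, which vanishes for closed curves and forces $\alpha^{(d)}(\bx)=0$, hence $\vol(\conv(\bx))=0$. Your closing remark that the first sentence must be read as \emph{full-dimensional} is also apt, since $\Cyc^d$ explicitly admits curves lying in lower-dimensional subspaces.
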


\section*{Acknowledgments}
We are grateful to Bernd Sturmfels for posing this problem to us, and to Anna-Laura Sattelberger for helpful discussions. We also want to thank Antonio Lerario for pointing out the connection to the Shapiro-Shapiro conjecture and to zonoid calculus. DL would like to thank Bernd Sturmfels and the MPI-MiS in Leipzig for the hospitality during a research visit where this project began. DL was supported by NCCR-Synapsy Phase-3 SNSF grant number 51NF40-185897 and Hong Kong Innovation and Technology Commission (InnoHK Project CIMDA). 

\bibliographystyle{alpha}
\bibliography{biblio}

\bigskip 

\noindent{\bf Authors' addresses:}
\smallskip
\small 

\noindent Carlos Am\'{e}ndola,
Technical University of Berlin
\hfill {\tt amendola@math.tu-berlin.de}

\noindent Darrick Lee, 
University of Oxford
\hfill {\tt darrick.lee@maths.ox.ac.uk}

\noindent Chiara Meroni,
MPI MiS Leipzig
\hfill {\tt chiara.meroni@mis.mpg.de}

\end{document}